\newtheorem{theorem}{Theorem}[section]
\newtheorem{lemma}[theorem]{Lemma}
\newtheorem{proposition}[theorem]{Proposition}
\newcommand{\e}{\varepsilon}
\newcommand{\beq}{\begin{equation}}
\newcommand{\eeq}{\end{equation}}
\newcommand{\beqq}{\begin{equation*}}
\newcommand{\eeqq}{\end{equation*}}
\theoremstyle{definition}
\theoremstyle{remark}
\newtheorem{remark}[theorem]{Remark}
\numberwithin{equation}{section}
\newcommand{\abs}[1]{\lvert#1\rvert}
\newcommand{\norm}[1]{\|#1\|}
\numberwithin{equation}{section}
\begin{document}

\title[many-body Schr\"odinger equation on tori]{On Strichartz estimates for many-body Schr\"odinger equation in the periodic setting}
\author{Xiaoqi Huang, Xueying Yu, Zehua Zhao and Jiqiang Zheng}

\address{Xiaoqi Huang
\newline \indent Department of Mathematics, University of Maryland\newline\indent
Kirwan Hall 4306, College Park, MD.}
\email{xhuang49@umd.edu}

\address{Xueying  Yu
\newline \indent Department of Mathematics, Oregon State University\indent 
\newline \indent  Kidder Hall 368, Corvallis, OR 97331\indent }
\email{xueying.yu@oregonstate.edu}

\address{Zehua Zhao
\newline \indent Department of Mathematics and Statistics, Beijing Institute of Technology, Beijing, China.
\newline \indent Key Laboratory of Algebraic Lie Theory and Analysis of Ministry of Education, Beijing, China.}
\email{zzh@bit.edu.cn}

\address{Jiqiang Zheng
\newline \indent Institute of Applied Physics and Computational Mathematics\newline\indent
Beijing, 100088, P.R.China.}
\email{zhengjiqiang@gmail.com}

\subjclass[2020]{Primary: 35Q55; Secondary: 35R01, 37K06, 37L50}

\keywords{Strichartz estimate, many-body Schr\"odinger equations, periodic NLS}

\begin{abstract}
%In this paper, we prove Strichartz estimates for N-body Schr\"odinger equations in the periodic setting (i.e. on tori $\mathbb{T}^d$, rational or irrational, where $d\geq 3$) with small interacting potentials in some certain sense. It is based on the standard Strichartz estimate for Schr\"odinger operators in the periodic setting Bourgain-Demeter \cite{BD}. (We note that we can also cover the waveguide case if we use Barron's recent result \cite{Barron}). This result can be regarded as a periodic analogue of Hong \cite{hong2017strichartz}.

In this paper, we prove Strichartz estimates for many body Schr\"odinger equations in the periodic setting, specifically on tori $\mathbb{T}^d$, where $d\geq 3$. The results hold for both rational and irrational tori, and for small interacting potentials in a certain sense. Our work is based on the standard Strichartz estimate for Schr\"odinger operators on periodic domains, as developed in Bourgain-Demeter \cite{BD}. As a comparison, this result can be regarded as a periodic analogue of Hong \cite{hong2017strichartz} though we do not use the same perturbation method. We also note that the perturbation method fails due to the derivative loss property of the periodic Strichartz estimate.

\end{abstract}

\maketitle

\setcounter{tocdepth}{1}
\tableofcontents

\parindent = 10pt     
\parskip = 5pt

\section{Introduction}
\subsection{Background and Motivations}
We consider the following many-body Schr\"odinger equation in the periodic setting for $N$ particles in $d$ dimensions, with $d \geq 3$ and $N \geq 1$. 
%Let dimension $d \geq 3$ and particle number $N \geq 1$. We consider the many-body Schr\"odinger equations in the periodic setting as follows,
\begin{equation}\label{maineq}
\begin{cases}
(i \partial_t+H_N)u(t,x_1,\ldots,x_N)=0, \\
u(0,x_1,\ldots,x_N)=u_0(x_1,\ldots,x_N) \in L^2_{x_1,\ldots,x_N} ,   
\end{cases}
\end{equation}
where 
\begin{align*}
H_N=\Delta_x-V_N=\sum_{\alpha=1}^{N} \Delta_{x_{\alpha}}-\sum_{1\leq \alpha<\beta \leq N}V(x_{\alpha}-x_{\beta}) . 
\end{align*}
Each particle is denoted by $x_{\alpha} \in \mathbb{T}^{d}$, where $\mathbb{T}=\mathbb{R}/2\pi\mathbb{Z}$, for any $\alpha \in \{1,\ldots,N \}$. The potential $V$ represents the interactions between any two particles.
%where $H_N=\Delta_x-V_N=\sum_{\alpha=1}^{N} \Delta_{x_{\alpha}}-\sum_{1\leq \alpha<\beta \leq N}V(x_{\alpha}-x_{\beta})$, and $\alpha$-th particle $x_{\alpha} \in \mathbb{T}^{d}$, with $\mathbb{T}=\mathbb{R}/2\pi \mathbb{Z}$, for any $\alpha \in \{1,\ldots,N \}$.

The integer parameter $N \geq 1$ in the Schr\"odinger equation represents the number of particles in a quantum system, which can often be very large. The interacting potentials of the form $V(x_{\alpha}-x_{\beta})$ represent the interactions between any two particles, which depend on their relative distance. Additionally, $\mathbb{T}^{d}$ denotes $d$-dimensional tori, which can be either rational or irrational.

%From physical explanations, $N \geq 1$ indicates the number of particles in a quantum system (which is often very large), and the interacting potentials of form $V(x_{\alpha}-x_{\beta})$ indicates the interactions of any two particles, which depends on their relative distance. Moreover, $\mathbb{T}^{d}$ denotes $d$-dimensional tori, rational or irrational. 

When $N=1$, the initial value problem \eqref{maineq} reduces to the nonlinear Schr\"odinger equation (NLS) with a potential, which has been extensively studied in the Euclidean case (i.e., when $\mathbb{T}^{d}$ is replaced by $\mathbb{R}^{d}$). This case is also known as the `single-body case', and the research on decay properties has a long history (see the introduction of \cite{hong2017strichartz}, the survey \cite{schlag2005dispersive}, and references therein). In this paper, we focus on the general case where $N\geq 1$, which can involve new difficulties compared to the single-body case, such as the issue of interacting potentials. See \cite{burq2003strichartz,burq2004strichartz} and the references therein regarding Strichartz estimate for the `single-body case'.
%When $N=1$, initial value problem\eqref{maineq} is exactly the standard nonlinear Schr\"odinger equation (NLS) with a potential, which has been well studied (in the Euclidean case, i.e. replacing $\mathbb{T}^{d}$ by $ \mathbb{R}^{d}$). It is also known as `the one-body case' (or the `single-body case') and the research on the decay properties has a long history (see the Introduction of \cite{hong2017strichartz}, the survey \cite{schlag2005dispersive} and the references therein). In this paper, we mainly concern the general case ($N\geq 1$ can be arbitrarily large), i.e. the many-body Schr\"odinger case, which will involve some new difficulties than the single body case such as the issue of interacting potentials.  (Add ref. regarding Strichartz estimate for the `single-body case'.)

The aim of this paper is to explore the Strichartz-type estimates for the $N$-body Schr\"odinger equation \eqref{maineq} in the periodic setting. Previous research on the Euclidean case of \eqref{maineq} has been conducted in \cite{hong2017strichartz}, with additional research on the two-body case in \cite{chong2021global} utilizing the scheme presented in \cite{keel1998endpoint}. The primary goal of this paper is to generalize the Strichartz-type estimates found in \cite{hong2017strichartz} to the periodic case. Additionally, the author is interested in exploring the recent developments in the topic of `dynamics of NLS on tori' by combining both `periodic spaces' and `many-body Schr\"odinger equations', i.e., studying the estimates for many-body Schr\"odinger equations on periodic spaces. In the next paragraph, we will briefly discuss the background of `NLS on tori'.

%The purpose of this paper is to investigate the Strichartz-type estimates to the $N$-body Schr\"odinger equation \eqref{maineq} in the periodic setting.  We note that the Euclidean case of \eqref{maineq} has been studied in \cite{hong2017strichartz}. (See also \cite{chong2021global} for a recently result which deals with the two body case via the scheme of \cite{keel1998endpoint}.) We intend to generalize the Strichartz-type estimates in \cite{hong2017strichartz} to the periodic case. That is one main motivation of this paper. Another motivation is the recent developments for the topic: `Dynamics of NLS on tori' so the author is interested in combining both of `periodic spaces' and `many-body Schr\"odinger equations' together, i.e. studying the estimates for many-body Schr\"odinger equations on periodic spaces. We will briefly mention the background  for `NLS on toris' in the next paragraph.

NLS is highly relevant in the context of nonlinear optics, and in the study of Bose-Einstein condensates, which has been extensively studied in recent decades in various settings, including the Euclidean spaces, torus setting, and waveguide manifolds\footnote{Waveguide manifolds indicate semi-periodic spaces $\mathbb{R}^m \times \mathbb{T}^n$ ($m,n\geq 1$). The dynamics of NLS on waveguide manifolds is a hot topic understudied in recent decades.}.
The well-posedness theory and the long-time behavior in the Euclidean setting (at least in the defocusing scenario)  have been very well understood (see for instance \cite{Iteam1,BD3,KM1}). 
The periodic model is also an important and challenging setting for the study of NLS. Many works have been done and a significant amount of progress has been made in recent years. We refer to \cite{CZZ,CGZ,HP,HTT1,HTT2,IPT3,IPRT3,KV1,yang2023scattering,YYZ,YY20,Z1,Z2,ZhaoZheng} with regard to the torus and waveguide settings and \cite{LYYZ,sire2022scattering,yu2021global} for other dispersive equations on waveguides. At last, we refer to \cite{Cbook,dodson2019defocusing,Taobook} for some classical textbooks on the study of NLS.

%Waveguide manifolds of form $ \mathbb{R}^{m}\times \mathbb{T}^{n}$ are of particular interest in the nonlinear optics of telecommunications. Generally, the well-posedness theory and the long-time behavior of NLS are hot topics in the area of dispersive equations and have been studied widely in recent decades. Naturally, the Euclidean case is first treated and the theory, at least in the defocusing setting, has been well established. We refer to \cite{Iteam1,BD3,KM1} for some typical  Euclidean results. Moreover, we refer to \cite{CZZ,CGZ,HP,HTT1,HTT2,IPT3,IPRT3,KV1,yang2023scattering,YYZ,Z1,Z2,ZhaoZheng} with regard to the torus and waveguide settings. (See also \cite{LYYZ,sire2022scattering,yu2021global} for other dispersive equations on waveguides.) One may roughly think of the waveguide case as the ``intermediate point" between the Euclidean case and the torus case since the waveguide manifold is a product of Euclidean spaces and the tori. The techniques used in Euclidean and torus settings are frequently combined and applied to waveguides problems. At last, we refer to \cite{Cbook,dodson2019defocusing,Taobook} for some classical textbooks on the study of NLS. (Change it to a paragraph for tori.)

As this paper focuses on the estimates and the PDE-level aspects of equation \eqref{maineq}, rather than the mathematical physics level, we will not delve too deeply into the background of many-body problems/equations from a physical perspective. Interested readers can refer to the introductions of \cite{chen2017global, chong2022global, chong2021global, chong2019dynamical, grillakis2013pair, sigal1987n} and the references therein for more information.

%Since the current paper concerns the estimates and the PDE-level of \eqref{maineq} rather than the mathematical physics level, we will not mention too much for the background of the many-body problems/equations from physical perspectives. We refer to the Introductions of \cite{chen2017global,chong2022global,chong2021global,chong2019dynamical,grillakis2013pair,sigal1987n} and the references therein for more information.

To the best of the authors' knowledge, this paper is the first result towards understanding the long-time dynamics of many-body Schr\"odinger equations in the context of periodic spaces (tori).
%To the authors' best knowledge, the current paper is the first result towards understanding long time dynamics for the many-body Schr\"odinger equations within the context of periodic spaces (tori).

\subsection{The statement of main results}
Now we are ready to state the main result of this paper, i.e. Strichartz estimate for \eqref{maineq}. We will also discuss some nonlinear results for this model (see Section \ref{5}).

We note that, as in \cite{hong2017strichartz,zhao2023strichartz}, we need to assume some smallness for the potential $V$ and this smallness does not depend on the initial data (only depends on the particle number $N$ and the universal constant). The condition is as follows,
\begin{equation}\label{condition}
    |V(x)|+\left|\big((1-\Delta)^{\frac{\e}{2}} V\big)(x)\right|\le \frac{C}{N^2},
\end{equation}
for some $\e>0$ that can be arbitrarily small. The main theorem reads,
\begin{theorem}\label{strimany}
Assuming $d\geq 3$, we consider \eqref{maineq} and fix a finite time interval $I$. There exists a small number $\epsilon>0$ such that if the interacting potential $V$ satisfies the condition \eqref{condition}, then for $q = \frac{2(d+2)}{d}$ and any $\alpha \in \{1,\ldots,N \}$, we have
\begin{equation}\label{strimany1}
\|  e^{it H_N} u_0 \|_{L^q_{t,x_{\alpha}}(I\times \mathbb{T}^d)L^2_{\hat{x}_{\alpha}} (\mathbb{T}^{d(N-1)})} \lesssim_\e \|u_0\|_{H^{\e}(\mathbb{T}^{Nd})},
\end{equation}
where $x_{\alpha}$ refers  to the $\alpha$-th variable and  $\hat{x}_{\alpha}$ denotes the remaining $N-1$ spatial variables other than the $\alpha$-th variable $x_{\alpha}$, i.e.,
\begin{equation}
   \hat{x}_{\alpha}=(x_1,\ldots , x_{\alpha-1},x_{\alpha+1}, \ldots ,x_N) \in \mathbb{T}^{d(N-1)},
\end{equation}
and
\begin{equation}
   \|u\|_{H^{\e}(\mathbb{T}^{Nd})}=\|(1-\Delta_x)^{\frac{\e}{2}}  u\|_{L^2(\mathbb{T}^{Nd})}\,\,\,\,\text{where}\,\,\,\, x=(x_1, \cdots, x_N)\in \mathbb{T}^{Nd}.
\end{equation}
\end{theorem}
\begin{remark}
Since $q=\frac{2(d+2)}{d}$ is the endpoint exponent for $d$-dimensional Strichartz estimate (see \cite{BD}), Theorem \ref{strimany} can be understood in this way: one considers a certain particle and fix other particles in $L^2$-norms, then the Strichartz estimate in the periodic setting can be recovered for \eqref{maineq}. As shown in the RHS, there is also an $\epsilon$ derivative loss. The estimates when $q>\frac{2(d+2)}{d}$ can be obtained via the interpolation with the mass conservation. 
\end{remark}
\begin{remark}
Theorem \ref{strimany} also includes the special case ($N=1$, one body case): Strichartz estimate for NLS with a potential in the periodic setting. When the potential satisfies certain conditions (such as \eqref{condition}), the standard Strichartz estimate as in \cite{BD} can be recovered, which is also known to be new.
\end{remark}
\begin{remark}
One can view  $(1-\Delta_x)^{\frac{\e}{2}}$ as a spectral multiplier for the Laplace Beltrami operator on $\mathbb{T}^{Nd}$, or equivalently, as a Fourier multiplier acting on the periodic function $u$ in $\mathbb{R}^{Nd}$, see e.g., \cite[Proposition 4]{huang2022sharp} for the proof of equivalence of these two Sobolev norms.
\end{remark}
\begin{remark}
Theorem \ref{strimany} concerns Strichartz norms in this type: $L^q_{t,x_{\alpha}}L^2_{\hat{x}_{\alpha}}$ (i.e. considering a certain particle $x_{\alpha}$ and fix other particles in $L^2$-norms). Another formulation for Strichartz estimate\footnote{See Theorem 1.1 in \cite{hong2017strichartz} for these two different types of Strichartz estimates in the Euclidean setting.} is considering $L^q_{t,x}$-type norms where $x=(x_1,...,x_N)$ is for all particles (i.e. treating all particles equally, without fixing any particle in $L^2$-norm). One can show,
for $q = \frac{2(Nd+2)}{Nd}$, there exists $\epsilon>0$ such that,
\begin{equation}
\|  e^{it H_N} u_0 \|_{L^q_{t,x_{\alpha}}(I\times \mathbb{T}^{Nd})} \lesssim_{\e}
 \|u_0\|_{H^{\e}(\mathbb{T}^{Nd})},
\end{equation}
The proof is similar to Theorem \ref{strimany} with little modifications so we omit it. See the proof of Theorem \ref{strimany} in Section \ref{4} for more details.    
\end{remark}
The main strategy for proving Theorem \ref{strimany} is briefly stated as follows. First, different from the Euclidean case (\cite{hong2017strichartz}), using a modification of \cite{hong2017strichartz} will not give the desired estimate due to the derivative loss of the Strichartz estimate in the periodic setting (see Section \ref{3} for explanations). The proof is based on the Strichartz estimate on tori \cite{BD} and two more elements: the equivalence of the Sobolev-type norms and the observation that the time interval is finite. See Section \ref{4} for more details. The nonlinear applications of Theorem \ref{strimany} will be discussed in Section \ref{5} and we will make a few remarks in Section \ref{rem}. 

At last, we note that the interacting potentials can be defined in the periodic setting (see the appendix in \cite{nam2020derivation} as an example). For convenience, we still use the notation $V(x-y)$ instead of $V(d(x,y))$ where $d$ is the associated metric.
%The proof of Theorem \ref{thm1} (Strichartz estimate) is based on the properties of function space $V^p_{\Delta_x}$ and a perturbation method (see Section \ref{3}). The main idea is the following: one establishes a nonlinear estimate for one arbitrary interacting potential (by treating it as a perturbation) and then sums them up. The key estimate is Proposition \ref{key}, which deals with an arbitrary interacting potential by regarding it as a forcing term. With its help, one can handle all of the interacting potentials by treating them as perturbations. Eventually, according to the smallness assumption, one can use a perturbation method to show the Strichartz estimate as desired. Compared with the single potential case ($N=1$), the interacting potentials (involves rotations) cause difficulties thus the `rotation flexible' function space $V^p_{\Delta_x}$ is needed; compared with the Euclidean analogue (\cite{hong2017strichartz}), the main new difference and difficulty is the appearance of the frequency truncation operator. 

\subsection{Structure of this paper}
The rest of this article is organized as follows. In Section \ref{pre}, we discuss function spaces and some estimates for this model; in Section \ref{3}, we explain that the perturbation method (as in \cite{hong2017strichartz}) fails due to the derivative
loss property for the periodic Strichartz estimate; in Section \ref{4}, we present the proof for Theorem \ref{strimany} (Strichartz estimate); in Section \ref{5}, we discuss the nonlinear applications of the Strichartz estimates (well-posedness); in Section \ref{rem}, we provide some additional remarks on this line of research.
%The rest of the article is organized as follows. In Section \ref{pre}, we discuss function spaces and some estimates for this model; in Section \ref{3}, we give the proof for Theorem \ref{thm1} (Strichartz estimate); in Section \ref{5}, we give a few further remarks on this research line.
\subsection{Notations}
We write $A \lesssim B$ to say that there is a constant $C$ such that $A\leq CB$. We use $A \simeq B$ when $A \lesssim B \lesssim A $. Particularly, we write $A \lesssim_u B$ to express that $A\leq C(u)B$ for some constant $C(u)$ depending on $u$. We use $C$ for universal constants and $N$ for the number of particles.

We say that the pair $(p,q)$ is $d$-(Strichartz) admissible if
\begin{equation}
    \frac{2}{p}+\frac{d}{q}=\frac{d}{2},\quad 2 \leq p,q \leq \infty \quad (p,q,d)\neq (2,\infty,2).
\end{equation}
Throughout this paper, we regularly refer to the spacetime norms
\begin{equation}
    \|u\|_{L^p_tL^q_x(I_t \times \mathbb{T}^d)}=\left(\int_{I_t}\left(\int_{ \mathbb{T}^d} |u(t,z)|^q dz \right)^{\frac{p}{q}} dt\right)^{\frac{1}{p}}.
\end{equation}
For the tori case, one often chooses the same exponent for $t$ and $x$. As stated in the above theorems,  we will use the following notations for convenience: 
\begin{itemize}
\item 
$x$ refers to the whole spatial variable;
\item
$x_{\alpha}$ denotes  the $\alpha$-th spatial variable;
\item
$\hat{x}_{\alpha}$ represents the remaining $N-1$ spatial variables except the $\alpha$-th variable $x_{\alpha}$.
\end{itemize}

Similar to the Euclidean case, function spaces such as $V^p_{\Delta}$ are also tightly involved. We will discuss them in Section \ref{pre}. (See also \cite{hong2017strichartz}.) 

Again, similar to the Euclidean case, to deal with the interacting potentials, we define the rotation operator $\mathcal{R}_{\alpha \beta}$ with respect to the $\alpha$-th variable $x_{\alpha}$ and the $\beta$-th variable $x_{\beta}$ by
\begin{equation}
\mathcal{R}_{\alpha \beta}(f(x_1, \ldots, x_{\alpha-1},\frac{x_{\alpha}-x_{\beta}}{\sqrt{2}},x_{\alpha+1}, \ldots, x_{\beta-1},\frac{x_{\alpha}+x_{\beta}}{\sqrt{2}},x_{\beta+1},\ldots,x_N))=f(x_1,\ldots, x_N).    
\end{equation}
That is, after applying the operator $\mathcal{R}_{\alpha \beta}$, the function $f$ rotates its $\alpha$-th variable $x_{\alpha}$ and $\beta$-th variable $x_{\beta}$, resulting in the disappearance of the interaction between them.

\subsection*{Acknowledgment} X. Huang is partially supported by an AMS-Simons travel grant. X. Yu is partially supported by NSF DMS-2306429. Z. Zhao was supported by the NSF grant of China (No. 12101046, 12271032) and the Beijing Institute of Technology Research Fund Program for Young Scholars. J. Zheng was supported by NSF grant of China (No. 12271051) and Beijing Natural Science Foundation 1222019. The first author and the third author have learned many-body Schr\"odinger models and related background during their postdoc careers at the University of Maryland. Thus they highly appreciate Prof. M. Grillakis, Prof. M. Machedon and Dr. J. Chong for related discussions, especially the paper of Hong \cite{hong2017strichartz}.

\section{Preliminaries}\label{pre}
In this section, we discuss the Littlewood-Pelay operators and some function spaces for the model \eqref{maineq}. See Section 2 to Section 4 in \cite{hong2017strichartz} for the Euclidean analogue.

We define the Fourier transform on $ \mathbb{T}^d$ as follows:
\begin{equation}
    (\mathcal{F} f)(\xi)= \int_{ \mathbb{T}^d}f(z)e^{-iz\cdot \xi} \, dz,
\end{equation}
where $\xi=(\xi_1,\xi_2,\ldots,\xi_{d})\in  \mathbb{Z}^d$. We also note the Fourier inversion formula
\begin{equation}
    f(z)=c \sum_{(\xi_{1},\ldots,\xi_{d})\in \mathbb{Z}^d}  (\mathcal{F} f)(\xi)e^{iz\cdot \xi}.
\end{equation}
Moreover, we define the Schr{\"o}dinger propagator $e^{it\Delta}$ by
\begin{equation}
    \left(\mathcal{F} e^{it\Delta}f\right)(\xi)=e^{-it|\xi|^2}(\mathcal{F} f)(\xi).
\end{equation}
We are now ready to define the Littlewood-Pelay projections. First, we fix $\eta_1: \mathbb{R} \rightarrow [0,1]$, a smooth even function satisfying
\begin{equation}
    \eta_1(\xi) =
\begin{cases}
1, \ |\xi|\le 1,\\
0, \ |\xi|\ge 2,
\end{cases}
\end{equation}
and $M=2^j$ a dyadic integer. Let $\eta^d :\mathbb{R}^d\rightarrow [0,1]$, 
\begin{align*}
\eta^d(\xi)=\eta_1(\xi_1)\eta_1(\xi_2)\eta_1(\xi_3)\cdots \eta_1(\xi_d)  .
\end{align*}
We define the Littlewood-Pelay projectors $P_{\leq M}$ and $P_{ M}$ by
\begin{equation}
    \mathcal{F} (P_{\leq M} f)(\xi):=\eta^d\left(\frac{\xi}{M}\right) \mathcal{F} (f)(\xi), \quad \xi \in   \mathbb{Z}^d, 
\end{equation}
and
\begin{equation}
P_M f=P_{\leq M}f-P_{\leq \frac{M}{2}}f.
\end{equation}
For any dyadic $M\in (0,\infty)$, we define
\begin{equation}
    P_{\leq M}:=\sum_{L\leq M,L \textmd{ dyadic}}P_L,\quad P_{> M}:=\sum_{M>L,L \textmd{ dyadic}}P_L.
\end{equation}
Next, we state the standard Strichartz estimate in the tori setting. See \cite{BD,KV1}. (Moreover, see \cite{keel1998endpoint} for the Euclidean analogue and \cite{Barron} for the waveguide manifold analogue.)
\begin{lemma}[Strichartz estimate in  tori]\label{toriStrichartz}
 Fix $d \geq 1$ and a finite time interval $I$. For $p \geq \frac{2(d+2)}{d}$, we have
\begin{equation}
\| P_{\leq M} e^{it \Delta_x} u_0 \|_{L^p_{t,x}(I \times \mathbb{T}^d)} \lesssim_{|I|} M^{\frac{d}{2}-\frac{d+2}{p}+\epsilon} \|u_0\|_{L^2_x(\mathbb{T}^d)}.
\end{equation}  
Here the $\epsilon$ can be removed when $p$ is apart from the endpoint $\frac{2(d+2)}{d}$.
\end{lemma}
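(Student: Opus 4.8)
The plan is to deduce this from the Bourgain--Demeter $\ell^2$ decoupling theorem for the paraboloid, via the by-now-standard reduction, so I would first dispose of the two soft reductions. Splitting $I$ into $O(\abs{I})$ unit subintervals and using that on each subinterval $e^{it\Delta_x}$ differs from $e^{i(t-t_0)\Delta_x}$ by a multiplier unitary on $L^2_x$, it suffices to work on $I=[0,1]$ at the cost of a factor $\abs{I}^{1/p}$. Then, using the Littlewood--Paley decomposition $P_{\leq M}=\sum_{1\leq L\leq M}P_L$, I would reduce to the single-frequency bound
\beq
\norm{P_L e^{it\Delta_x}u_0}_{L^p_{t,x}([0,1]\times\mathbb{T}^d)} \less L^{\f d2-\f{d+2}p+\epsilon}\norm{u_0}_{L^2_x(\mathbb{T}^d)}
\eeq
for each dyadic $L\leq M$; summing these in $L$ by Cauchy--Schwarz and almost-orthogonality recovers the claimed bound precisely because the exponent $\f d2-\f{d+2}p+\epsilon$ is strictly positive when $p\geq\f{2(d+2)}d$, which is where the $\epsilon$ is spent at the endpoint $p=\f{2(d+2)}d$.

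Next I would reduce the single-frequency bound, first at the critical exponent $p_c=\f{2(d+2)}d$, to decoupling. Writing $u_0=P_Lu_0=\sum_{\xi\in\mathbb{Z}^d,\,\abs{\xi}\less L}a_\xi e^{ix\cdot\xi}$, the spacetime function $e^{it\Delta_x}u_0=\sum_\xi a_\xi e^{i(x\cdot\xi-t\abs{\xi}^2)}$ becomes, after the parabolic rescaling $(s,y)=(L^2t,Lx)$, an exponential sum whose frequencies $(\xi/L,\abs{\xi}^2/L^2)$ lie on the truncated unit paraboloid, while the rescaled domain $\mathbb{T}^d\times[0,1]$ is contained in a ball of radius $R\simeq L^2$, so that the canonical $R^{-1/2}$-caps correspond exactly to individual lattice frequencies. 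I would apply $\ell^2$ decoupling at $p_c$, observe that each single-cap contribution is a constant-modulus exponential so its $L^{p_c}$ norm over the ball equals $\abs{a_\xi}$ times a volume factor, and invoke Parseval, $\big(\sum_\xi\abs{a_\xi}^2\big)^{1/2}\simeq\norm{u_0}_{L^2(\mathbb{T}^d)}$; undoing the rescaling (the volume factors cancel) then yields $\norm{P_L e^{it\Delta_x}u_0}_{L^{p_c}_{t,x}([0,1]\times\mathbb{T}^d)}\less L^\epsilon\norm{u_0}_{L^2_x}$. For irrational tori I would run the same scheme with an anisotropic rescaling as in Killip--Visan \cite{KV1}, since decoupling lives in $\R^{d+1}$ and is insensitive to the lattice.

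Finally, for $p>p_c$ I would interpolate the $L^{p_c}$ bound against the trivial estimate $\norm{P_L e^{it\Delta_x}u_0}_{L^\infty_{t,x}}\leq\sum_{\abs{\xi}\less L}\abs{a_\xi}\less L^{d/2}\norm{u_0}_{L^2_x}$ (Cauchy--Schwarz), with $\f1p=\f{\ta}{p_c}$; the resulting exponent is exactly $\f d2(1-\ta)+\epsilon\ta=\f d2-\f{d+2}p+\epsilon'$, which closes the argument. The one genuinely deep ingredient here is the $\ell^2$ decoupling inequality of Bourgain--Demeter \cite{BD} (together with \cite{KV1} for the irrational case); everything else is routine Littlewood--Paley/rescaling/interpolation bookkeeping, and I expect the only mild nuisance to be keeping careful track of the volume normalizations when translating the $\R^{d+1}$ decoupling statement into the periodic exponential-sum estimate.
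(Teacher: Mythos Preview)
The paper does not prove this lemma at all; it is stated as a known result with a reference to Bourgain--Demeter \cite{BD} and Killip--Visan \cite{KV1}. Your proposal is precisely the standard derivation from the $\ell^2$ decoupling theorem that those references carry out, so in that sense you are reproducing exactly what the paper is citing rather than diverging from it.

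One small gap worth flagging: your interpolation between the $L^{p_c}$ bound (with $L^\epsilon$ loss) and the trivial $L^\infty$ bound yields an exponent $\tfrac d2-\tfrac{d+2}{p}+\epsilon'$ with $\epsilon'=\theta\epsilon>0$, so as written it does \emph{not} remove the $\epsilon$ for $p>p_c$ as the lemma claims---the implicit constant $C_\epsilon$ in decoupling blows up as $\epsilon\to 0$, so you cannot simply send $\epsilon'\to 0$. To genuinely remove the loss away from the endpoint one typically interpolates instead between the $\epsilon$-loss estimate at some $p_c<q<p$ and the $L^\infty$ bound, so that the $\epsilon$-loss at $q$ is absorbed into the strictly positive exponent $\tfrac d2-\tfrac{d+2}{p}$; this is routine but should be said. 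Everything else in your sketch (reduction to unit time, single-scale reduction and geometric summation, parabolic rescaling to match $R^{-1/2}$-caps with lattice points, the anisotropic variant for irrational tori) is correct and standard.
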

\begin{remark}
    As we can see, compared with the Euclidean case, there are three main differences for the tori case regarding the standard Strichartz estimate: 1. the appearance of the frequency truncation operator; 2. it is a local estimate (i.e. on finite time interval $I$); 3. the derivative loss $\epsilon$. These differences will cause some new difficulties for our problem compared to the Euclidean case. 
\end{remark}
It is natural to extend the above estimate for the many-body case as follows,
\begin{proposition}[Strichartz estimate for the many-body case]\label{mbStrichartz}
Fix $d \geq 1$, $N\geq 1$ and a finite time interval $I$. For $p \geq \frac{2(d+2)}{d}$ and $\alpha \in \{1,\ldots,N \}$, we have
\begin{equation}
\| P_{\leq M} e^{it \Delta_{x_1,\ldots , x_N} } u_0 \|_{L^p_{t,x_{\alpha}}(I \times \mathbb{T}^d)L^2_{\hat{x}_{\alpha}} (\mathbb{T}^{d(N-1)})}  \lesssim M^{\frac{d}{2}-\frac{d+2}{p}+\epsilon} \|u_0\|_{L^2_x(\mathbb{T}^{Nd})},
\end{equation}
where $x_{\alpha}$ refers  to the $\alpha$-th variable and  $\hat{x}{\alpha}$ denotes the remaining $N-1$ spatial variables other than the $\alpha$-th variable $x_{\alpha}$, i.e.,
\begin{equation}
   \hat{x}_{\alpha}=(x_1,\ldots, x_{\alpha-1},x_{\alpha+1}, \ldots ,x_N) \in \mathbb{T}^{d(N-1)}.
\end{equation}
Here the Littlewood-Pelay operator $P_{\leq M}$ indicates the frequency truncation for all $x_1,\ldots, x_N$ (it is also true if it is only restricted to and $x_{\alpha}$-direction, i.e. $P_{x_{\alpha} \leq M}$). Again, the $\epsilon$ can be removed when $p$ is different from the endpoint $\frac{2(d+2)}{d}$.   
\end{proposition}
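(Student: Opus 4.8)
The plan is to deduce Proposition \ref{mbStrichartz} from the single-particle estimate Lemma \ref{toriStrichartz} by a Fubini/Minkowski-type slicing argument, exploiting the product structure of both the Laplacian $\Delta_{x_1,\dots,x_N} = \sum_\alpha \Delta_{x_\alpha}$ on $\mathbb{T}^{Nd}$ and the associated propagator $e^{it\Delta_{x_1,\dots,x_N}} = \prod_\alpha e^{it\Delta_{x_\alpha}}$. The key observation is that the operator $e^{it\Delta_{x_\alpha}}$ acting only in the $\alpha$-th variable commutes with the $L^2$-norm taken in the remaining variables $\hat x_\alpha$, and similarly $P_{\leq M}$ (when restricted to the $x_\alpha$ direction, or when taken as a product over all directions with the other factors bounded) respects this structure. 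So the strategy is: (i) reduce to the case where the frequency truncation is only in $x_\alpha$, since the full $P_{\leq M}$ differs by extra factors $P_{x_\beta \leq M}$ ($\beta\neq\alpha$) which are bounded on $L^2_{x_\beta}$ uniformly in $M$; (ii) freeze $\hat x_\alpha$ and apply the one-dimensional-in-particle-index estimate Lemma \ref{toriStrichartz} in the single variable $x_\alpha \in \mathbb{T}^d$; (iii) restore the $L^2_{\hat x_\alpha}$-norm by Minkowski's inequality.

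Concretely, I would first write, for fixed $\hat x_\alpha$, that the propagator in the non-$\alpha$ variables is unitary on $L^2_{\hat x_\alpha}$, and compute
\begin{equation}
\big\| P_{x_\alpha \leq M} e^{it\Delta_{x_1,\dots,x_N}} u_0 \big\|_{L^p_{t,x_\alpha}(I\times\mathbb{T}^d) L^2_{\hat x_\alpha}}
\leq \big\| \big\| P_{x_\alpha \leq M} e^{it\Delta_{x_\alpha}} \big( e^{it\Delta_{\hat x_\alpha}} u_0\big) \big\|_{L^p_{t,x_\alpha}} \big\|_{L^2_{\hat x_\alpha}},
\end{equation}
where I use $p \geq \frac{2(d+2)}{d} \geq 2$ so that Minkowski's inequality lets me pull the $L^2_{\hat x_\alpha}$-norm outside the $L^p_{t,x_\alpha}$-norm. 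Then for each frozen $\hat x_\alpha$, Lemma \ref{toriStrichartz} applied in the variable $x_\alpha\in\mathbb{T}^d$ gives
\begin{equation}
\big\| P_{x_\alpha \leq M} e^{it\Delta_{x_\alpha}} v \big\|_{L^p_{t,x_\alpha}(I\times\mathbb{T}^d)} \lesssim M^{\frac d2 - \frac{d+2}{p} + \epsilon} \| v \|_{L^2_{x_\alpha}(\mathbb{T}^d)}
\end{equation}
with $v = (e^{it\Delta_{\hat x_\alpha}} u_0)(\cdot\,; \hat x_\alpha)$ evaluated at time $t$; but since the constant in Lemma \ref{toriStrichartz} is independent of $v$, and since $e^{it\Delta_{\hat x_\alpha}}$ is unitary on $L^2_{\hat x_\alpha}$, taking the $L^2_{\hat x_\alpha}$-norm and applying Fubini/Tonelli yields $\lesssim M^{\frac d2-\frac{d+2}{p}+\epsilon}\|u_0\|_{L^2_x(\mathbb{T}^{Nd})}$, which is the claim. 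To pass from $P_{x_\alpha\leq M}$ to the full $P_{\leq M}$, I insert $P_{\leq M} = P_{x_\alpha \leq M}\prod_{\beta\neq\alpha} P_{x_\beta\leq M}$ and absorb each $P_{x_\beta\leq M}$ into $u_0$, using that these are uniformly bounded Fourier multipliers on $L^2$.

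The argument is essentially soft, so I do not expect a genuine obstacle; the only point requiring a little care is the commutation of the frequency projector with the slicing. One must check that the product structure $\eta^d(\xi/M) = \prod_{i} \eta_1(\xi_i/M)$ on $\mathbb{T}^{Nd}$ factors as a tensor product of the corresponding projectors in the $x_1,\dots,x_N$ blocks, so that $P_{\leq M}$ on $\mathbb{T}^{Nd}$ is literally $\bigotimes_\alpha P_{\leq M}^{(x_\alpha)}$; this is immediate from the definition in Section \ref{pre} since the multiplier is a product over all $Nd$ coordinates. After that, the only inequality being invoked beyond Lemma \ref{toriStrichartz} is Minkowski's inequality in the form $\|\,\|f\|_{L^2_y}\,\|_{L^p_x} \geq \|\,\|f\|_{L^p_x}\,\|_{L^2_y}$ for $p\geq 2$, together with unitarity of the free evolution in the frozen variables. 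The parenthetical remark in the statement (that it also holds with $P_{x_\alpha\leq M}$ in place of $P_{\leq M}$) is in fact the version one proves directly, with the full-projector version following a fortiori as above.
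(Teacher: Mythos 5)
Your argument is correct and is essentially identical to the paper's proof: both reduce the full projector to $P_{x_\alpha\leq M}$ via the tensor-product structure, use unitarity of the propagator in the frozen variables $\hat x_\alpha$, swap the $L^p_{t,x_\alpha}$ and $L^2_{\hat x_\alpha}$ norms by Minkowski's inequality (valid since $p\geq 2$), and then apply Lemma \ref{toriStrichartz} slice by slice. The only blemish is that in your closing paragraph you state Minkowski's inequality with the inequality sign reversed ($\|\,\|f\|_{L^2_y}\|_{L^p_x}\leq \|\,\|f\|_{L^p_x}\|_{L^2_y}$ is the correct direction for $p\geq 2$), but your displayed application uses the correct direction, so nothing is affected.
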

\begin{proof}[Proof of Proposition \ref{mbStrichartz}]
We note that the Littlewood-Pelay operator $P_{\leq M}$ commutes with the linear Schr\"odinger operator $e^{it\Delta_x} = e^{it \Delta_{x_1,\ldots ,x_N} }$. Moreover, $e^{it \Delta_{x_1,\ldots ,x_N} }$ is unitary. Applying the Minkowski ($p>2$) and the standard tori Strichartz estimate (Lemma \ref{toriStrichartz}), we have
\begin{align}
\| P_{\leq M} e^{it \Delta_{x_1,\ldots, x_N} } u_0 \|_{L^p_{t,x_{\alpha}}(I \times \mathbb{T}^d)L^2_{\hat{x}_{\alpha}} (\mathbb{T}^{d(N-1)})} &= \| P_{x_{\alpha}, \leq M} 
 e^{it \Delta_{x_{\alpha}}}  u_0 \|_{L^p_{t,x_{\alpha}}(I \times \mathbb{T}^d)L^2_{\hat{x}_{\alpha}} (\mathbb{T}^{d(N-1)})} \\
&\lesssim \| P_{x_{\alpha}, \leq M} 
 e^{it \Delta_{x_{\alpha}}}  u_0 \|_{L^2_{\hat{x}_{\alpha}}L^p_{t,x_{\alpha}}(I \times \mathbb{T}^d)} \\
 &\lesssim M^{\frac{d}{2}-\frac{d+2}{p}+\epsilon} \|u_0\|_{L^2_{\hat{x}_{\alpha}} L^2_{x_{\alpha}}} \\
 &= M^{\frac{d}{2}-\frac{d+2}{p}+\epsilon} \|u_0\|_{L^2_x}.
\end{align}  
Again, here $I$ indicates a finite time interval since the Strichartz estimate in the tori setting (Lemma \ref{toriStrichartz}) concerns the local estimate.
\end{proof}
 As mentioned in \cite{hong2017strichartz} at the end of Section 3.3, Strichartz estimates with frozen spatial variables (as Proposition \eqref{mbStrichartz} above) are insufficient to prove Strichartz estimate for \eqref{maineq} due to the presence of interacting potentials. Therefore, a space-time norm that plays the role of the rotated space-time norm is required. This part is almost the same as Section 4.1 in \cite{hong2017strichartz}, with some natural modifications. For more details, we also refer to \cite{HTT1,HTT2,koch2007priori}.

We note that the definitions and properties in Section 4.1 of \cite{hong2017strichartz} are general enough to be naturally applied to our model in the tori setting. The authors construct function spaces with favorable properties for a separable Hilbert space $H$ and self-adjoint operator $S$. In this paper, we can choose $H$ to be $L^2_x$ and $S$ to be $\Delta_{x}$ in the tori setting, where $x=(x_1,\ldots,x_N)$ and $x_{\alpha}\in \mathbb{T}^d$ for $\alpha \in \{1,\ldots,N \}$, as in \eqref{maineq}. Therefore, the definitions and associated properties for our case will also hold. Hence, we refer to Section 4.1 of \cite{hong2017strichartz} for the function spaces and corresponding estimates and properties. For instance, we will use the following property of the $V^p_{\Delta}$-space, which follows from the definition (see Proposition 2 in \cite{hong2017strichartz}).
%We note that the definitions and properties in Subsection 4.1. of \cite{hong2017strichartz} are general enough which can be applied for our model in the waveguide setting naturally. They construct function spaces with nice properties for a separable Hilbert space $H$ and self-adjoint operator $S$. In this paper, we can just choose $H$ to be $L^2_x$ and $S$ to be $\Delta_{x}$ in the waveguide setting, where $x=(x_1,\ldots,x_N)$ and $x_{\alpha}\in  \mathbb{T}^d$ for $\alpha \in \{1,\ldots,N\}$ as in \eqref{maineq}. Then the definitions and associated properties for our case will hold as well. Thus we refer to Subsection 4.1. of \cite{hong2017strichartz} for the function spaces and corresponding estimates/properties. For instance, we will use the following property of $V^p_{\Delta}$-space. (It follows from the definition. See Proposition 2 in \cite{hong2017strichartz}.)
\begin{equation}
\|\mathbf{1}_{[0,\infty)]} e^{it\Delta_x} u_0\|_{V^p_{\Delta_x}}= \|u_0\|_{L^2_x}.   
\end{equation}
Moreover, the duality, the inclusion properties, and the transference principle of $V^p_{\Delta}$-space are also often used. See Section 4.1 of \cite{hong2017strichartz}. The transference principle is as follows,
\begin{lemma}[Transference principle]\label{trans}
Let $d \geq 1$, $1<p<2$, $q\geq 2$ and $X$ be a Banach space. If a function $u: \mathbb{R} \rightarrow X$ satisfies the bound 
\begin{equation}
\|e^{it\Delta_x}u_0\|_{L^q_t X} \lesssim \|u_0\|_{L^2_x},   
\end{equation}
then
\begin{equation}
 \|u\|_{L^q_t X} \lesssim \|u\|_{V^p_{\Delta_x}}.    
\end{equation}
\end{lemma}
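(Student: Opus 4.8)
The plan is to derive the transference principle (Lemma \ref{trans}) directly from the atomic structure of the $U^p$-spaces and the duality $(V^p_{\Delta_x})^* \supset U^{p'}_{\Delta_x}$, following the standard Koch--Tataru machinery (as in \cite{koch2007priori,HTT1}). First I would recall that $V^p_{\Delta_x} \hookrightarrow U^q_{\Delta_x}$ whenever $q > p$, and more to the point that it suffices to prove the estimate $\|u\|_{L^q_t X} \lesssim \|u\|_{U^q_{\Delta_x}}$ for $q \geq 2$, since $1 < p < 2 \leq q$ gives the chain $\|u\|_{U^q_{\Delta_x}} \lesssim \|u\|_{U^p_{\Delta_x}} \lesssim \|u\|_{V^p_{\Delta_x}}$ by the nesting of these spaces. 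So the real content is the transference bound at the level of a $U^q_{\Delta_x}$-atom.

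Second, I would test the claim on a single $U^q_{\Delta_x}$-atom $a(t) = \sum_{k} \mathbf{1}_{[t_{k-1}, t_k)}(t)\, e^{it\Delta_x} f_{k-1}$ with $\sum_k \|f_{k-1}\|_{L^2_x}^q \leq 1$. On the interval $[t_{k-1}, t_k)$ we have $a(t) = e^{it\Delta_x} f_{k-1}$, so by the hypothesis $\|e^{it\Delta_x} f_{k-1}\|_{L^q_t X} \lesssim \|f_{k-1}\|_{L^2_x}$ we get
\begin{equation}
\|a\|_{L^q_t X}^q = \sum_k \|a\|_{L^q_t([t_{k-1},t_k); X)}^q \leq \sum_k \|e^{it\Delta_x} f_{k-1}\|_{L^q_t X}^q \lesssim \sum_k \|f_{k-1}\|_{L^2_x}^q \leq 1,
\end{equation}
using here that $q$ is the \emph{same} exponent in the Lebesgue-in-time norm and in the atomic $\ell^q$-summation, which is exactly why the atomic bound closes without loss. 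Then, since any $u \in U^q_{\Delta_x}$ is an absolutely convergent superposition $u = \sum_\ell \lambda_\ell a_\ell$ of atoms with $\sum_\ell |\lambda_\ell|$ comparable to $\|u\|_{U^q_{\Delta_x}}$, the triangle inequality in $L^q_t X$ gives $\|u\|_{L^q_t X} \lesssim \sum_\ell |\lambda_\ell| \lesssim \|u\|_{U^q_{\Delta_x}}$, and combined with the first step this yields $\|u\|_{L^q_t X} \lesssim \|u\|_{V^p_{\Delta_x}}$.

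The only point requiring care, and the one I would flag as the main obstacle, is the passage from the hypothesis $\|e^{it\Delta_x} u_0\|_{L^q_t X} \lesssim \|u_0\|_{L^2_x}$ on the \emph{whole line} to its use on each dyadic-in-time piece $[t_{k-1}, t_k)$: since $\mathbf{1}_{[t_{k-1},t_k)} e^{it\Delta_x} f_{k-1}$ is just a restriction of $e^{it\Delta_x} f_{k-1}$, one has $\|\mathbf{1}_{[t_{k-1},t_k)} e^{it\Delta_x}f_{k-1}\|_{L^q_t X} \leq \|e^{it\Delta_x}f_{k-1}\|_{L^q_t X}$ trivially, so in fact this causes no trouble; the genuinely delicate bookkeeping is only that the $\ell^q$-aggregation of the atom's pieces matches the $L^q_t$ norm, which is the defining feature of $U^q$. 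One should also note that the statement as used in the proof of Proposition \ref{key} is applied with $X = L^q_{x_\alpha} L^2_{\hat x_\alpha}$ (a Banach space, as required) and with the time-truncation $\mathbf{1}_{[-s_k,-s_{k-1}]}$ absorbed into the $V^p$-norm via the definition, so the abstract lemma is exactly what is needed. Finally, one invokes $\|\mathbf{1}_{[0,\infty)} e^{it\Delta_x} u_0\|_{V^p_{\Delta_x}} = \|u_0\|_{L^2_x}$ and the many-body Strichartz estimate of Proposition \ref{mbStrichartz} to verify the hypothesis of Lemma \ref{trans} with the explicit constant $M^{\frac d2 - \frac{d+2}{p} + \epsilon}$, which then propagates into Corollary \ref{cor1}.
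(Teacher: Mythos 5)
Your argument is the standard atomic proof of the transference principle, and it is essentially the same as what the paper relies on: the paper gives no proof of its own here, deferring to Proposition 3 of \cite{hong2017strichartz}, which argues exactly as you do (test on a $U^q_{\Delta_x}$-atom, apply the linear bound on each subinterval, sum in $\ell^q$ against the $L^q_t$ norm, then pass from $V^p$ to $U^q$). One link in your reduction is stated incorrectly: the inequality $\|u\|_{U^p_{\Delta_x}} \lesssim \|u\|_{V^p_{\Delta_x}}$ is false in general --- the embedding goes the other way, $U^p \hookrightarrow V^p$, and $V^p$ embeds into $U^q$ only for $q$ \emph{strictly} larger than $p$. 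This does not damage your proof, since the embedding you state first, $V^p_{\Delta_x} \hookrightarrow U^q_{\Delta_x}$ for $q>p$, is correct and is all that is needed here because $1<p<2\leq q$; simply drop the intermediate $U^p$ step from the chain.
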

\begin{remark}
 We note that the Bourgain spaces $X^{s,b}$ (also known as Fourier restriction space) enjoy the similar transfer principle (see \cite{Taobook} for more info.). As summarized in \cite{hong2017strichartz}, the Strichartz estimates in the $V^p_{\Delta_x}$ sharpen the bounds in $X^{s,b}$ by $0+$ in that Strichartz estimates in the $X^{s,b}$ space do not cover the endpoint Strichartz estimates, while those in the $V^p_{\Delta_x}$-space do.   
\end{remark}

\section{Why does the perturbation method fail for the periodic case?}\label{3}
In this section, we will explain why the perturbation method (as in \cite{hong2017strichartz} with suitable modifications) \textbf{fail} for the periodic case. It is natural try this method since the small potentials are under considerations. We will first explain the main idea and attempt to apply this method to prove the Strichartz estimate for \eqref{maineq} (see \cite{hong2017strichartz} for the Euclidean case.). Then we can see how it fails.

The proof of many body Strichartz estimates in \cite{hong2017strichartz} relies on the properties of the function space $V^p_{\Delta_x}$ and a perturbation method. The main idea is to first establish a nonlinear estimate for each arbitrary interacting potential by treating it as a perturbation, and then summing up all the potentials. The key estimate in the proof is Proposition \ref{key} (the tori analogue of Proposition 4 in \cite{hong2017strichartz}), which deals with a single arbitrary interacting potential by regarding it as a forcing term. This estimate allows one to handle all the interacting potentials uniformly as perturbations. Finally, by using the smallness assumption, we can apply a perturbation method to obtain the desired Strichartz estimate. We refer to \cite{hong2017strichartz} for more details.

In comparison to the case with a single potential ($N=1$), the interacting potentials in this problem pose difficulties due to their rotational invariance. As a result, the function space $V^p_{\Delta_x}$, which is flexible under rotations, is required. We consider the Strichartz estimate in Theorem \ref{strimany}. In fact, one may also consider Strichartz estimate in the form of Proposition \ref{mbStrichartz} by replacing the Laplacian by the $H_N$ operator in \eqref{maineq}. (However, that case would be more difficult.\footnote{In contrast to the Euclidean case (\cite{hong2017strichartz}), the presence of a frequency truncation operator in this proof would be a new difficulty. This operator is necessary to handle the low-frequency modes, which are affected differently by the interaction potentials than the high-frequency modes.})

The main estimate one needs is as follows:

\begin{proposition}\label{key}
Let $d \geq 3$, $1<p<2$ and $q > \frac{2(d+2)}{d}$. Let $I$ be a finite interval. Consider $u$ solves \eqref{maineq}. Then, we have
\begin{equation}
\big\| \textbf{1}_{I} \int_0^t e^{i(t-s)\Delta_x}(V(x_{\alpha}-x_{\beta})u(s)) \, ds  \big\|_{V^p_{\Delta}} \leq C \|V\|_{L_x^{\frac{q}{q-2}}(\mathbb{T}^d)} \| u\|_{V^p_{\Delta}} ,
\end{equation}
where $C$ is for the universal constant.
\end{proposition}
\begin{remark}
Proposition \ref{key} indicates that one can regard the potential terms as perturbations. It suffices to consider one arbitrary interacting potential $V(x_{\alpha}-x_{\beta})$ since the $V^p_{\Delta}$-norm is rotation-flexible.
\end{remark}
\begin{remark}
See Proposition 4 in \cite{hong2017strichartz} for the Euclidean analogue. Since one concerns the tori case, it is enough to consider a local-in-time version (on a finite time interval $I$).
\end{remark}
We now explain why this will not work as follows. (We will try to prove it as in the Euclidean case with natural modifications.) The issue comes from the derivative loss property of the Strichartz estimate on tori.
\begin{proof}[A tentative proof which fails]
For notational convenience, we denote
\begin{equation}
    w= \textbf{1}_{I} \int_0^t e^{i(-s)\Delta_{x}}( F(s)) \, ds,
\end{equation}
where $F=V(x_{\alpha}-x_{\beta})u(s)$ is treated as the forcing term (or say a perturbative term).

We will estimate $w$ by the duality argument. Since we only expect $w \in V^p_{-}$, not $w \in V^p$, we consider $\tilde{w}(t)=w(-t)$.

Similar to Proposition 4 of \cite{hong2017strichartz}, using duality, it suffices to show that 
\begin{equation}
    \sum_{j=1}^{J}\langle a(t_{j-1}),\tilde{w}(j)-\tilde{w}(t_{j-1}) \rangle_{L^2_x}  \lesssim \|V\|_{L_x^{\frac{q}{q-2}}} \|u\|_{V^p_{\Delta}}
\end{equation}
for any fine partition of unity $t=\{t_j\}_{j=0}^J$ and any $U^{p'}$-atom $a(t)=\sum_{k=1}^{K} \textbf{1}_{(s_{k-1},s_k)}\phi_{k-1}$. (We note that the $U^{p'}$-space is the dual of the $V^p_{\Delta}$-space.) 

Doing some standard simplifications as in Proposition 4 of \cite{hong2017strichartz} (expanding atoms $a$ in terms of $\phi_k$), one can get a simpler sum
\begin{equation}
    \sum_{k=1}^{K}\langle \phi_{k-1}, \tilde{w}(s_k)-\tilde{w}(s_{k-1})\rangle_{L^2_x}.
\end{equation}
We further write it as
\begin{align}
&\sum_{k=1}^{K}\langle \phi_{k-1}, \tilde{w}(s_k)-\tilde{w}(s_{k-1})\rangle_{L^2_x} \\
=&  -\sum_{k=1}^{K} \int_{-s_k}^{-s_{k-1}} \langle \phi_{k-1},e^{-is\Delta_x}(F(s)) \rangle_{L^2_x} \, ds \\
=&  -\sum_{k=1}^{K} \int_{-s_k}^{-s_{k-1}} \langle e^{is\Delta_x}\mathcal{R} \phi_{k-1},\mathcal{R}( F(s)) \rangle_{L^2_x} \, ds \\
=&  -\sum_{k=1}^{K} \int_{\mathbb{R}} \langle e^{is\Delta_x}\mathcal{R} \phi_{k-1}, \textbf{1}_{[-s_k,-s_{k-1}]} \mathcal{R}( F(s)) \rangle_{L^2_x} \, ds,
\end{align}
where $\mathcal{R}$ denotes any rotation operator. (It is just $\mathcal{R}_{\alpha \beta}$ for interacting potential $V(x_{\alpha}-x_{\beta})$.) We want to control it by $\|V\|_{L_x^{\frac{q}{q-2}}} \|u\|_{V^p_{\Delta}}$.

Then, applying H\"older's inequality, the Strichartz estimate (Proposition \ref{mbStrichartz}) and the transference property of $V^p_{\Delta_x}$-space (Lemma \ref{trans}), we estimate it by
\begin{align}
& \sum_{k=1}^{K}\langle \phi_{k-1}, \tilde{w}(s_k)-\tilde{w}(s_{k-1})\rangle_{L^2_x} \\
&= \sum_{k=1}^{K}\langle  \phi_{k-1}, \tilde{w}(s_k)-\tilde{w}(s_{k-1})\rangle_{L^2_x} \\
&\lesssim \sum_{k=1}^{K} \| e^{it\Delta}\mathcal{R} \phi_{k-1}\|_{L^{q'}_tL^{q}_{x_{\alpha}}L^2_{\hat{x}_{\alpha}}}\|\textbf{1}_{[-s_k,-s_{k-1}]} \mathcal{R}(F(s))\|_{L^q_tL^{q'}_{x_{\alpha}}L^2_{\hat{x}_{\alpha}}} \textmd{ (using the H\"older) }  \\
&\lesssim \sum_{k=1}^{K} \| e^{it\Delta}\mathcal{R} \phi_{k-1}\|_{L^{q}_tL^{q}_{x_{\alpha}}L^2_{\hat{x}_{\alpha}}}\|\textbf{1}_{[-s_k,-s_{k-1}]} \mathcal{R}( F(s))\|_{L^q_tL^{q'}_{x_{\alpha}}L^2_{\hat{x}_{\alpha}}} \textmd{ (since it is a finine time interval) } \\
&\lesssim M^{\frac{d}{2}-\frac{d+2}{q}}\sum_{k=1}^{K} \|\phi_{k-1}\|_{L^2_x}  \|V\|_{L_x^{\frac{q}{q-2}}} \|\textbf{1}_{[-s_k,-s_{k-1}]} \mathcal{R}( u)\|_{L^q_tL^{q}_{x_{\alpha}}L^2_{\hat{x}_{\alpha}}} \textmd{ (using the Strichartz estimate. $M^{\textmd{power}}$ appears!) } \\
&\lesssim  \sum_{k=1}^{K} \|\phi_{k-1}\|_{L^2_x}  \|V\|_{L_x^{\frac{q}{q-2}}} \|\textbf{1}_{[-s_k,-s_{k-1}]} (u)\|_{V^p_{\Delta_x}} \textmd{ (using the transference principle. It fails!) } \\
&\lesssim  \|V\|_{L_x^{\frac{q}{q-2}}} \big\| \|\phi_{k-1}\|_{L^2_x} \big\|_{l^{p'}} \cdot \big\| \|\textbf{1}_{[-s_k,-s_{k-1}]} ( u)\|_{V^p_{\Delta_x}} \big\|_{l^{p}} \textmd{ (using the H\"older) } \\
&\lesssim  \|V\|_{L_x^{\frac{q}{q-2}}} \big\| \|\textbf{1}_{[-s_k,-s_{k-1}]} ( u)\|_{V^p_{\Delta_x}} \big\|_{l^{p}}.
\end{align}
The first equality (the first line equals the second line) follows from the addition of Fourier two supports, where $C$ is some positive constant. $q'$ is the dual of $q$ 
 in the sense of 
 \begin{equation}
  \frac{1}{q}+\frac{1}{q'}=1.   
 \end{equation}
 Moreover, $q$ and $\frac{q}{q-2}$ are the H\"older dual of $q'$ in the sense of
\begin{equation}
    \frac{1}{q}+\frac{1}{\frac{q}{q-2}}=\frac{1}{q'}.
\end{equation}
We note that in the last line we have used the inclusion property of discrete $L^p$ spaces (i.e. $l^{p}$-spaces). ($1<p<2$ implies $p'>2$.) 

To close the argument, now it remains to show that
\begin{equation}
\big\| \|\textbf{1}_{[-s_k,-s_{k-1}]} (u)\|_{V^p_{\Delta_x}} \big\|_{l^{p}}=\big\{ \sum_{k=1}^{K} \|\mathbf{1}_{[-s_k,-s_{k-1})}  u\|^{p}_{V^p_{\Delta_x}} \big\}^{\frac{1}{p}} \leq \| u\|_{V^p_{\Delta_x}}.    
\end{equation}
This estimate follows exactly as the Euclidean case (using the definition of $V^p_{\Delta_x}$). There is no difference in the periodic setting. \emph{The tentative proof ends here.}

 \textbf{Comments.} As we mentioned, the problem arises when we use the transference principle. This works surely for the Euclidean case. However, for the periodic case, it \textbf{fails} because of the essential derivative loss. (See Lemma \ref{trans}.) This argument fails also due to the $M$-power will not disappear such that the RHS can not be controlled, which leads the failure of the perturbative method.

 We also note that even for the one body case ($N=1$ in \eqref{maineq}), this method would still not work. The essential reason is still the derivative loss of Strichartz estimate.
\end{proof}

\section{The proof for Theorem 1.1}\label{4}

As discussed in the previous section, the perturbation method fails due to the derivative loss property for the periodic Strichartz estimate. Instead one needs to investigate more the property of the $H$-operator (treating the Laplacian and the interacting potentials as a whole). In this section, we present a new method rather than the perturbation method to obtain the Strichartz estimates (i.e. our main theorem, Theorem \ref{strimany}).

Before we state the proof for Theorem \ref{strimany}, we first mention three key elements as follows. As we will see very soon, they play crucial roles.

\emph{Key element 1:} We will use the equivalence of norms frequently: $\|(1+H)^{\epsilon} u\|_{L_x^2} \sim \|(1-\Delta)^{\epsilon} u\|_{L_x^2}$. ($H=-\Delta+V$).

\emph{Key element 2:} We will use the fact that we consider the Strichartz estimate on a \textbf{finite time interval}. It is important. For the Euclidean case, this scheme does not work since an infinite time interval is concerned for that case.

\emph{Key element 3:} We will use Strichartz estimate on tori (i.e. Lemma \ref{toriStrichartz}, see Bourgain-Demeter's seminal work \cite{BD} on Strichartz estimate via the decoupling method) as blackbox. This estimate is essential.

\begin{proof}[Proof of Theorem \ref{strimany}]
Recall that we have the assumption for the potential $V$, 
\begin{equation}\label{vcondition}
    |V(x)|+\left|\big((1-\Delta)^{\frac{\e}{2}} V\big)(x)\right|\le \frac{C}{N^2},
\end{equation} 
for some constant $C>0$.

If $V$ satisfies \eqref{vcondition}, it is then straightforward to check that 
$H_N=\Delta_x-V_N=\sum_{\alpha=1}^{N} \Delta_{x_{\alpha}}-\sum_{1\leq \alpha<\beta \leq N}V(x_{\alpha}-x_{\beta})$ is self adjoint and $C-H_N$ is a 
positive operator, and we also have  
\begin{equation}\label{equiv}
   C_0^{-1}\|(1-\Delta_{x}) u\|_{L^2(\mathbb{T}^{Nd})}\le \|(C-H_N) u\|_{L^2(\mathbb{T}^{Nd})}\le C_0 \|(1-\Delta_{x}) u\|_{L^2(\mathbb{T}^{Nd})},
\end{equation}
for some constant $C_0$.  By Stein's analytic interpolation theorem (see \cite{stein1956interpolation}),
this implies,
\begin{equation}\label{1}
    \|(C-H_N)^\e u\|_{L^2(\mathbb{T}^{Nd})}\approx  \|(1-\Delta_{x})^\e u\|_{L^2(\mathbb{T}^{Nd})},\,\,\,\forall\,\,0<\e\le 1.
\end{equation}

Recalling Proposition \ref{mbStrichartz}, we know  for $q=\frac{2(d+2)}{d}$, 
\begin{equation}\label{i.6}
\bigl\|e^{it\Delta_{x}}u_0\bigr\|_{L^q_{t,x_{\alpha}}(I\times \mathbb{T}^d)L^2_{\hat{x}_{\alpha}}} 
\lesssim \|u_0\|_{H^{\e}(\mathbb{T}^{Nd})},
\end{equation}
for arbitrarily small $\epsilon$-derivative loss. 

We note that our goal is to show for the same $q$, the following estimate can be recovered for $H_V$ operator.
\begin{equation}\label{i.61}
\bigl\|e^{itH_V}u_0\bigr\|_{L^q_{t,x_{\alpha}}(I\times \mathbb{T}^d)L^2_{\hat{x}_{\alpha}}} 
\lesssim \|u_0\|_{H^{\e}(\mathbb{T}^{Nd})}.
\end{equation}

By Duhamel's principle,
$$e^{itH_N}u_0=e^{it\Delta_{x}}u_0-i\sum_{1\leq \alpha<\beta \leq N}\int_0^t e^{i(t-s)\Delta_{x}}V(x_{\alpha}-x_{\beta})e^{isH_N}u_0ds.$$
The first term can be treated by using \eqref{i.6}.
For the second term, note that for each term in the summation,  by \eqref{i.6}, we have 
\begin{equation}\label{11}
    \begin{aligned}
     & \left\|   \int_0^t e^{i(t-s)\Delta_{x}}V(x_{\alpha}-x_{\beta})e^{isH_N}u_0ds\right\|_{L^q_{t,x_{\alpha}}(I\times \mathbb{T}^d)L^2_{\hat{x}_{\alpha}}} \\
     &\le \int_0^1 \left\|    e^{-is\Delta_{x}}V(x_{\alpha}-x_{\beta})e^{isH_N}u_0\right\|_{H^{\e}(\mathbb{T}^{Nd})} ds \\
     &=\int_0^1 \left\|V(x_{\alpha}-x_{\beta})e^{isH_N}u_0\right\|_{H^{\e}(\mathbb{T}^{Nd})} ds.
    \end{aligned}
\end{equation}

    By using the fractional Leibniz rule of   Kenig, Ponce, and Vega \cite{kenig1993well}, we have for each fixed $s$,
\begin{equation}\label{12}
    \begin{aligned}
    &\left\|V(x_{\alpha}-x_{\beta})e^{isH_N}u_0\right\|_{H^{\e}(\mathbb{T}^{Nd})}
\\
&\qquad\le \left\|V\right\|_{L^\infty(\mathbb{T}^{d})}  \left\|e^{isH_N}u_0\right\|_{H^\e(\mathbb{T}^{Nd})} \\
    & \qquad  \qquad+\left\|\big((1-\Delta)^{\frac{\e}{2}} V\big)\right\|_{L^\infty(\mathbb{T}^{d})}\left\|e^{isH_N}u_0\right\|_{L^2(\mathbb{T}^{Nd})}  +\left\|V\right\|_{L_{\e_1}^p(\mathbb{T}^{d})}  \left\|e^{isH_N}u_0\right\|_{L^q_{\e_2}(\mathbb{T}^{Nd})}.
    \end{aligned}
\end{equation}
Here $\frac1p+\frac1q=\frac12$, $1<p,q<\infty$, $\e_1+\e_2=\e$ with $0<\e_1,\e_2<1$, and for $1<p,q<\infty$, $\left\|f\right\|_{L^p_\e(\mathbb{T}^{d})}=\left\|(1-\Delta)^{\frac{\e}{2}} f\right\|_{L^p(\mathbb{T}^{d})}$   and $\left\|f\right\|_{L^q_\e(\mathbb{T}^{Nd})}=\left\|(1-\Delta_{x})^{\frac{\e}{2}} f\right\|_{L^q(\mathbb{T}^{Nd})}$ .

By Sobolev inequality, if we choose $q$ close enough to $2$, it is not hard to show 
\begin{equation}
     \left\|e^{isH_N}u_0\right\|_{L^q_{\e_2}(\mathbb{T}^{Nd})} \le   \left\|e^{isH_N}u_0\right\|_{H^{\e}(\mathbb{T}^{Nd})}.
\end{equation}

 By \eqref{1}, we also have 
\begin{equation}\label{eq}
\begin{aligned}
     \left\|e^{isH_N}u_0\right\|_{H^{\e}(\mathbb{T}^{Nd})}&\approx \left\|(C+H_N)^{\e} e^{isH_N}u_0\right\|_{L^2(\mathbb{T}^{Nd})}\\
     &=\left\|(C+H_N)^{\e} u_0\right\|_{L^2(\mathbb{T}^{Nd})}\approx \left\|(1-\Delta_{x})^{\e} u_0\right\|_{L^2(\mathbb{T}^{Nd})}.
\end{aligned}
\end{equation}

By \eqref{vcondition}, \eqref{11}, \eqref{12} and \eqref{eq}, we have 
\begin{equation}
    \left\|\sum_{1\leq \alpha<\beta \leq N}\int_0^t e^{i(t-s)\Delta_{x}}V(x_{\alpha}-x_{\beta})e^{isH_N}u_0ds\right\|_{L^q_{t,x_{\alpha}}(I\times \mathbb{T}^d)L^2_{\hat{x}_{\alpha}}}\le C \|u_0\|_{H^{\e}(\mathbb{T}^{Nd})},
\end{equation}
which completes the proof of Theorem \ref{strimany}.
\end{proof}
\section{Discussions for the nonlinear applications}\label{5}
In this section, we discuss some related nonlinear applications (well-posedness theory) based on the established Strichartz estimates. We will consider two cases: 4D cubic model and 3D quintic model since they are typical NLS models. 

\emph{On 4D cubic periodic NLS.}

First, we recall the 4D cubic NLS model\footnote{One may also consider 4D cubic NLS with a potential in the periodic setting, i.e. $(i\partial_t+\Delta_{x}+V(x))u=|u|^2u,\quad u(0,x)=u_0(x)\in H^1_{x}$ where $x\in \mathbb{T}^4$.} as follows,
\begin{equation}
(i\partial_t+\Delta_{x,y}+V(x-y))u=|u|^2u,\quad u(0,x,y)=u_0(x,y)\in H^1_{x,y},    
\end{equation}
where $x,y \in \mathbb{R}^2$.

We denote $H=\Delta_{x,y}+V(x-y)$.

After we have the Strichartz estimate for the operator $H$ as in Theorem \ref{strimany}, in general, there are several standard steps to establish the well-posedness theory: 1. Function spaces (Bourgain space $X^{s,b}$ and $V^p$-$U^p$ type spaces, for examples); 2. Bilinear estimate; 3. Nonlinear estimate; 4. The proof of the well-posedness theory.

We first present an easy way to show the well-posedness once we consider high regularity data\footnote{We note that we use the fact that the linear operator $e^{itH}$ is unitary.}. We consider the general case \eqref{maineq}. The calculations are as follows. 
For $s > \frac{dN}{2}$,
\begin{align*}
\norm{u}_{H^s} & \leq \norm{e^{it H} u_0}_{H^s}  + \norm{\int_0^t e^{i(t-s)H} \abs{u}^2 u (s) \, ds}_{H^s} \\
& \leq \norm{u_0}_{H^s} + \int_0^t \norm{ e^{i(t-s)H} \abs{u}^2 u (s) }_{H^s} \, ds\\
& \leq \norm{u_0}_{H^s} + \int_0^t \norm{ \abs{u}^2 u (s) }_{H^s} \, ds\\
& \leq \norm{u_0}_{H^s} + \int_0^t \norm{u}_{H^s}^3 \, ds \\
& \leq \norm{u_0}_{H^s} + T \norm{u}_{H^s}^3. 
\end{align*} 
By taking $T>0$ small enough, it yields
\begin{align*}
\norm{u}_{H^s} \leq 2 \norm{u_0}_{H^s}.
\end{align*}

Since the higher regularity assumption is too strong, that would be good if the well-posedness theory could be established in the energy space (i.e. $H^1$ space) since 4D cubic NLS and 3D quintic NLS are both energy critical. We consider the 4D cubic model. If one further assumes that the linear operator $e^{itH}$ commutes with the Littlewood-Pelay operator $P_N$\footnote{This assumption is a little too strong and not very natural.}, then the next lemma would follow from \cite{KV1}.
\begin{lemma}[Bilinear estimate]\label{bilinear}
Fix whole dimension $d \geq 3$ and $0<T<1$. Then for $1\leq N_2 \leq N_1$, we have
\begin{equation}\label{bilinear}
    \|u_{N_1} v_{N_2}\|_{L^2_{t,x}([0,T)\times  \mathbb{T}^{d} )}\lesssim N_2^{\frac{d-2}{2}}\|u_{N_1}\|_{Y^0([0,T))} \|v_{N_2}\|_{Y^0([0,T))}.
\end{equation}
\end{lemma}
We note that $Y^0$ space is based on the $H$-operator. The proof follows from Lemma 3.1 in \cite{KV1} once we have $e^{itH}$ commute with the Littlewood-Pelay operator. 

Then, the following nonlinear estimate holds in a standard way, which gives the well-posedness results for both of the two models: 4D cubic NLS and 3D quintic NLS. We refer to \cite{KV1} for more details.
\begin{lemma}\label{mainest}
Fix whole dimension $d=3,4$, for any $0<T<1$, we have

\begin{equation}\label{mainest1}
   \left\|\int_0^t e^{i(t-s)\Delta} F(u(s)) ds  \right\|_{X^1([0,T))} \lesssim \|u\|^{\frac{d+2}{d-2}}_{X^1([0,T))},
\end{equation}
and 
\begin{equation}\label{mainest2}
\aligned
    &\left\|\int_0^t e^{i(t-s)\Delta}  \left( F(u+w)(s)-F(u)(s) \right) ds  \right\|_{X^1([0,T))} \\
    &\lesssim \|w\|_{X^1([0,T))}\left(\|u\|_{X^1([0,T))}+\|w\|_{X^1([0,T))}\right)^{\frac{4}{d-2}}.
    \endaligned
\end{equation}
\end{lemma}

\emph{On 3D quintic periodic NLS.} The well-posedness theory would be similar to the 4D case with suitable modifications due to the quintic nonlinearity so we leave it for interested readers.

\section{Further remarks}\label{rem}
Finally, we would like to make a few additional remarks regarding the many-body Schr\"odinger model \eqref{maineq}.
%At last, we make a few more remarks for the many-body Schr\"odinger model \eqref{maineq} as follows.

%The main results in this paper and \cite{hong2017strichartz,zhao2023strichartz} are all based on perturbative scheme which are tightly dependent on the smallness assumption of the potentials. For the periodic case, the frequency truncation assumption may be also necessary. It is interesting to ask if one can remove the smallness assumption to prove Strichartz estimates like Theorem \ref{thm1} or Corollary \ref{cor1}. More analysis will probably be involved. The general case may be hard and the two body case may be a proper model to start with. (See \cite{chong2021global} for the Euclidean case.) 

1. An interesting direction is to consider many-body equations with nonlinearity $F(t,x_1,\ldots,x_N)$ and study their long-time behavior. There are few general theories and results regarding this topic, even for the Euclidean case, such as global well-posedness theory and long-time behavior. It may also be challenging to consider the general case, and the two-body case could still serve as a useful starting point. For instance, the $\Lambda$-equation in the Hartree-Fock-Bogoliubov equations is an example of the two-body case, although it is part of a coupled system, which makes it more complicated. (See \cite{chong2022global,chong2021global} for more details.) See Section \ref{5} for some discussions.

%It is also interesting to consider many-body equation with a nonlinearity $F(t,x_1,\ldots,x_N)$ and study the long time behavior. There are few general theories and results regarding this topic, such as global well-posedness theory and long time behavior, even for the Euclidean case. Also, it may be hard to consider the general case thus the two body case may still be a good model to start with. (The $\Lambda$-equation in the Hartree–Fock–Bogoliubov equations is an example for the two body case, though it is in a coupled system which makes it more complicated. See \cite{chong2022global,chong2021global}.) 

2. The results presented in this paper are focused solely on the estimates and PDE-level analysis of many-body Schr\"odinger equations. It would be interesting to study the many-body Schr\"odinger equations in the tori setting (or waveguide setting, as in \cite{zhao2023strichartz}) from the perspectives of mathematical physics. Examples of such perspectives include \cite{chong2019dynamical,grillakis2013pair,grillakis2017pair}. A natural question is to generalize the classical results on many body problems to the periodic case.

%The results in the current paper is only about the estimates and the PDE-level of many-body Schr\"odinger equations. One may consider the many-body Schr\"odinger equations in the tori setting (or waveguide setting, see \cite{zhao2023strichartz}) from the perspectives of mathematical physics. (See \cite{chong2019dynamical,grillakis2013pair,grillakis2017pair} for examples.)

% \bibliographystyle{amsplain}
\bibliography{manybody}
\bibliographystyle{abbrv}

\end{document}